\newcommand{\R}{\mathbb{R}}
\newcommand{\Sbb}{\mathbb{S}}
\newcommand{\F}{\mathcal{K}}
\newcommand{\G}{\mathcal{H}}
\newcommand{\Q}{\mathcal{Q}}
\newcommand{\Rc}{\mathcal{R}}
\newcommand{\SOC}{\mathcal{SOC}}
\newcommand{\RSOC}{\mathcal{RSOC}}
\newcommand{\PSD}{\mathcal{PSD}}
\renewcommand{\P}{\mathcal{P}}
\renewcommand{\S}{\mathcal{S}}
\newcommand{\Shor}{\text{\sc Shor}}
\newcommand{\RLT}{\text{\sc RLT}}
\newcommand{\SOCRLT}{\text{\sc SOCRLT}}
\newcommand{\Kron}{\text{\sc Kron}}
\newcommand{\Us}{\text{\sc Beta}}
\newcommand{\myJ}{J}
\DeclareMathOperator{\rank}{rank}
\DeclareMathOperator{\diag}{diag}
\DeclareMathOperator{\cone}{conic.hull}
\DeclareMathOperator{\Range}{Range}
\DeclareMathOperator{\bd}{bd}
\DeclareMathOperator{\myint}{int}
\DeclareMathOperator{\Arr}{Arr}
\newtheorem{lemma}{Lemma}
\newtheorem{theorem}{Theorem}
\newtheorem{example}{Example}
\title{A Slightly Lifted Convex Relaxation \\ for Nonconvex Quadratic Programming
\\ with Ball Constraints}
\author{%
Samuel Burer\thanks{Department of Business Analytics, University
of Iowa, Iowa City, IA, 52242--1994, USA\@. Email: {\tt
samuel-burer@uiowa.edu}.}%
}
\date{February 28, 2023 \\ Revised: \today}
\begin{document}

\maketitle

\begin{abstract}

\noindent Globally optimizing a nonconvex quadratic over the
intersection of $m$ balls in $\R^n$ is known to be polynomial-time
solvable for fixed $m$. Moreover, when $m=1$, the standard semidefinite
relaxation is exact. When $m=2$, it has been shown recently that an
exact relaxation can be constructed using a disjunctive semidefinite
formulation based essentially on two copies of the $m=1$ case. However,
there is no known explicit, tractable, exact convex representation
for $m \ge 3$. In this paper, we construct a new, polynomially sized
semidefinite relaxation for all $m$, {\color{black} which does not
employ a disjunctive approach. We show that our relaxation is exact
for $m=2$}. {\color{black} Then, for $m \ge 3$}, we demonstrate
empirically that it is {\color{black} fast and} strong compared to
existing relaxations. The key idea {\color{black} of the relaxation} is
a simple lifting of the original problem into dimension $n+1$. Extending
this construction: (i) we show that nonconvex quadratic programming
over $\|x\| \le \min \{ 1, g + h^T x \}$ has an exact semidefinite
representation; and {\color{black} (ii) we construct a new relaxation
for quadratic programming over the intersection of two ellipsoids,
which globally solves all instances of a benchmark collection from the
literature.}

\end{abstract}

\begin{onehalfspace}

\section{Introduction}

We study the nonconvex optimization problem
\begin{equation} \label{equ:origqp} \tag{QP}
    \min_{x \in \R^n}
    \left\{
        x^T Q x + 2 \, q^T x :
        \begin{array}{l}
            \|x - c_i \| \le \rho_i \quad \forall \ i=1,\ldots,m
        \end{array}
    \right\},
\end{equation}
where the data are the $n \times n$ symmetric matrix $Q$, column vectors
$q, c_1, \ldots, c_m \in \R^n$, and positive scalars $\rho_1, \ldots,
\rho_m \in \R$. In words, (\ref{equ:origqp}) is nonconvex quadratic
programming over the intersection of $m$ balls in $n$-dimensional space.
Without loss of generality, we assume $c_1 = 0$ and $\rho_1 = 1$, i.e.,
the first constraint is the unit ball. Note that, while the feasible set
of (\ref{equ:origqp}) is convex, the objective function is generally
nonconvex since $Q$ is not necessarily positive semidefinite. We assume that
(\ref{equ:origqp}) is feasible and hence has an optimal solution,
and we are specifically interested in strong convex relaxations of
(\ref{equ:origqp}).

Although (\ref{equ:origqp}) is polynomial-time solvable to within
$\epsilon$-accuracy for fixed $m$ \cite{Bienstock.2016}, there is no
known exact convex relaxation for all $m$. By {\em exact\/}, we mean
a relaxation with optimal value equal to that of (\ref{equ:origqp}).
When $m=1$, the standard semidefinite (SDP) relaxation is exact
\cite{Rendl.Wolkowicz.1997}. This relaxation is often called the
{\em Shor relaxation\/} and involves a single $(n + 1) \times
(n + 1)$ positive semidefinite variable. For $m=2$, Kelly et
al.~\cite{Kelly.et.al.2022} have recently shown that a particular
disjunctive semidefinite relaxation is exact. Their construction
is based on essentially two copies of the $m=1$ case, and as
a result, it utilizes two $(n + 1) \times (n + 1)$ positive
semidefinite variables. These and other SDP relaxations are detailed
in Section \ref{sec:background}. Our goals in this paper are: (i) to
construct a new, strong, non-disjunctive SDP relaxation, which is
polynomially sized in $n$ and $m$; (ii) {\color{black} to prove its
exactness for $m = 2$}; and (iii) to demonstrate empirically its effectiveness
{\color{black} for larger $m$}. In particular, by {\em
non-disjunctive\/}, we mean that the SDP optimizes over just one
semidefinite variable like the Shor relaxation---but with additional
constraints on that matrix variable.

Another line of research examines conditions on the data of
(\ref{equ:origqp}) under which the Shor relaxation is exact. Note
that, without loss of generality, by an orthogonal rotation in
the $x$ space, $Q$ may be assumed to be diagonal. In this case,
(\ref{equ:origqp}) is a special case of a {\em diagonal quadratically
constrained quadratic program (diagonal QCQP)\/}, that is, a QCQP in
which every quadratic function has a diagonal Hessian. Diagonal QCQPs
are well studied in the literature \cite{Burer.Ye.2020, Burer.Ye.2021,
Locatelli.2022, Wang.Kilinc-Karzan.2022, Azuma.et.al.2022},
where it is known, for example, that the Shor relaxation of
(\ref{equ:origqp}) is exact if $\text{sign}(q_j) = -\text{sign}(c_{1j})
= \cdots = -\text{sign}(c_{mj})$ for all $j = 1,\ldots,n$
\cite{Sojoudi.Lavaei.2014}. In this paper, however,
we seek a relaxation that is strong irrespective of the data.

(\ref{equ:origqp}) can also be optimized globally using any of the
high-quality global-optimization algorithms and software packages
available today. We are also aware of several papers studying global
approaches for (\ref{equ:origqp}) that take into account the problem's
specific structure \cite{Bienstock.Michalka.2014, Beck.Pan.2017,
Almaadeed.et.al.2022}. Each of these papers uses some combination of
enumeration and lower bounding to find a global optimal solution and
verify its optimality. In contrast, we are interested in computing a
single strong bound ({\color{black} exact for $m=2$, as mentioned above}),
and we will show that {\color{black} for $m \ge 3$} our relaxation is
frequently strong enough to deliver a global optimal solution via a
rank-1 SDP optimal solution. Moreover, our relaxation could certainly
be used as the lower-bounding technique within an enumerative scheme
to optimize (\ref{equ:origqp}) globally, but we leave this for future
research.

As we began this project, we were motivated by the idea of constructing
an exact relaxation for (\ref{equ:origqp}) {\color{black} for all $m$}.
Given that (\ref{equ:origqp}) is only known to be polynomial-time for
fixed $m$---not as a function of $m$---it was unclear whether a given
relaxation, which is polynomial in $m$, could possibly be exact for
all $m$. Ultimately, {\color{black} we will show that our relaxation is
exact for $m=2$ but} 
inexact for $m \ge 3$; see Section \ref{sec:balls}. Even still,
we believe that our new relaxation makes significant progress towards
approximating (\ref{equ:origqp}) {\color{black} for larger $m$} as we
demonstrate empirically.

Our approach is based on two simple transformations of the feasible set
of (\ref{equ:origqp}). First, we write the feasible set in the equivalent form
\[
    \left\{ x \in \R^n :
            x^T x \le \rho_i^2 - c_i^T c_i + 2 c_i^T x \quad \forall \ i=1,\ldots,m
    \right\}.
\]
Note that the $i$-th constraint {\color{black} is representable using a
rotated second-order-cone constraint of size $n+2$.} Second, we insert
an auxiliary variable $\beta \in \R$ within
each
constraint:
\[
    \left\{ {x \choose \beta} \in \R^{n+1} :
        \begin{array}{l}
            x^T x \le \beta \\
            \beta \le \rho_i^2 - c_i^T c_i + 2 c_i^T x \quad \forall \ i=1,\ldots,m
        \end{array}
    \right\}.
\]
Here, $\beta \le \rho_i^2 - c_i^T c_i + 2 c_i^T x$ is a linear constraint in $x$ and $\beta$. We can then equivalently
minimize $x^T Q x + 2 q^T x$ over feasible ${x \choose \beta} \in
\R^{n+1}$. {\color{black} The intuition for these transformations comes
from the fact that we are exchanging $m-1$ cone constraints in the
original problem with $m$ linear constraints.} In this sense, we have
greatly simplified the structure of the feasible set.

The paper is organized as follows. In Section \ref{sec:background}, we
introduce the required background necessary to build and evaluate SDP
relaxations of (\ref{equ:origqp}). We also discuss the literature on
relaxations specifically for (\ref{equ:origqp}), including the exact
relaxations mentioned above when $m = 1$ and $m = 2$. {\color{black}
Then, in Section \ref{sec:newrelax}, we introduce our new relaxation,
which we show to be exact for $m=2$ and empirically quite tight for
$m \ge 3$. (The proof for $m=2$ is relegated to Section \ref{sec:proofs}.) Next,
Section \ref{sec:extensions} considers two extensions.} First, we study
the related problem of nonconvex quadratic programming over the set
\[
    \left\{ x \in \R^n : 
    \begin{array}{l}
        \|x\| \le 1 \\
        \|x\| \le g + h^T x 
    \end{array}
    \right\},
\]
i.e., when linear functions bound the norm of $x$ instead of the
squared norm. This
is an interesting case in its own right, arising for
example as a substructure in the optimal power flow problem
\cite{Chen.et.al.2017,Eltved.Burer.2023} and also studied by Kelly et
al.~\cite{Kelly.et.al.2022}. Second, we extend our method to nonconvex
quadratic programming over the intersection of two general ellipsoids,
which is known as the {\em Celis-Dennis-Tapia problem\/} or the {\em two
trust-region subproblem (TTRS)\/}. See \cite{Consolini.Locatelli.2021}
and references therein for recent work on TTRS. We show empifically that
our approach solves all instances of a test set from the literature.
Finally, Section \ref{sec:proofs} contains the proof of our main
exactness result from Section \ref{sec:newrelax}.

We remark that all computational results in the paper were coded in
Python using the Fusion API of MOSEK 10.0.37 \cite{mosek} and run on a
M2 MacBook Air with 24 GB of RAM. Source code, scripts, and results
are available at \url{https://github.com/sburer/ballconstraints}.

\subsection{Notation and terminology}

Our notation and terminology is mostly standard. $\R^d$ is the space of
$d$-dimensional real column vectors, and $\Sbb^d$ is the space of $d
\times d$ real symmetric matrices. The identity matrix in $\Sbb^d$ is
denoted $I_d$, and the trace inner product on $\Sbb^d$ is defined as $M
\bullet N := \text{trace}(MN)$ for any two matrices $M,N \in \Sbb^d$.
Define
\begin{align*}
    \SOC^d &:= \left\{ v \in \R^d : \|(v_2, \ldots, v_d)\| \le v_1 \right\}, \\
    {\color{black} \RSOC^d} &:= {\color{black} \left\{ v \in \R^d : v_3^2 + \cdots +
    v_d^2 \le 2 v_1 v_2, \ v_1 \ge 0, \ v_2 \ge 0 \right\}}, \\
    \PSD^d &:= \left\{ M \in \Sbb^d : M \text{ is positive semidefinite} \right\}
\end{align*}
to be the $d$-dimensional second-order cone, {\color{black} the
$d$-dimensonal rotated second-order cone}, and the $d \times d$ positive
semidefinite cone, respectively. {\color{black} It is well known that
\[
    T_d \, \RSOC^d = \SOC^d, \quad \text{where} \quad
    T_d := \begin{pmatrix} 1/\sqrt2 & 1/\sqrt2 & 0 \\
    1/\sqrt2 & -1/\sqrt2 & 0 \\ 0 & 0 & I_{d-2} \end{pmatrix}
    \in \S^{d \times d},
\]
that is, $\RSOC^d$ is the image of $\SOC^d$ under the orthogonal
rotation given by $T_d$.} We use {\em SOC\/} and {\em PSD\/} as
abbreviations for {\em second-order cone\/} and {\em positive
semidefinite\/}. For any cone $S$, its {\em conic hull\/} is defined as
all finite sums of members in $S$, i.e., $\cone({\color{black} S}) := \{ \sum_{k=1}^K
s_k : K \in \mathbb{N}, s_k \in S \}$.

\section{Background on Semidefinite Relaxations} \label{sec:background}

In this section, we recount techniques and constructions from the
literature, which we will use for building convex relaxations in Section
\ref{sec:newrelax}. We also detail prior research on relaxations of
(\ref{equ:origqp}) specifically. Finally, we discuss standard ways
to measure the quality of a given relaxation on a particular problem
instance, which will be employed in Sections \ref{sec:newrelax}--\ref{sec:extensions}.

We caution the reader that we will reuse (or ``overload'') some notation
in this section. For example, the variable $x$ and dimension $n$ in
this section are not strictly the same as defined in the
Introduction.

\subsection{Techniques for building an SDP relaxation} \label{sec:build}

For this subsection as well as Section \ref{sec:meas}, we introduce the
generic nonconvex quadratic programming problem
\begin{equation} \label{equ:genericQP}
    \min \left\{ x^T Q x : x \in \F, \ x_1 = 1 \right\},
\end{equation}
where
\[
    \F := \left\{
        x \in \R^n :
        \begin{array}{l}
            \ell_1^T x \ge 0, \ \ell_2^T x \ge 0 \\
            L_3^T x \in \SOC^{d_3}, \ L_4^T x \in \SOC^{d_4} 
        \end{array}
    \right\}
\]
is a closed, convex cone with polyhedral and second-order-cone
constraints defined by the data vectors $\ell_1, \ell_2 \in \R^n$
and data matrices $L_3 \in \R^{n \times d_3}, L_4 \in \R^{n \times
d_4}$. Specifically, $\F$ is defined by two linear constraints and two
SOC constraints of different sizes. We assume that
{\color{black} $\{ x \in \F : x_1 = 1 \}$} is nonempty and bounded in which case (\ref{equ:genericQP}) has
an optimal solution. We also assume that the constraints
defining $\F$ imply $x_1 \ge 0$.

It is well known that (\ref{equ:genericQP}) is equivalent to
\begin{equation} \label{equ:genericQP2}
    \min \left\{ Q \bullet X : X \in \G, \ X_{11} = 1 \right\},
\end{equation}
where
\[
    \G := \cone \left\{ X = xx^T : x \in \F \right\} \subseteq \Sbb^n.
\]
{\em Equivalent\/} means that both have the same optimal
value and there exists a rank-1 optimal solution $X^* = x^*
(x^*)^T$ of (\ref{equ:genericQP2}), where $x^*$ is optimal for
(\ref{equ:genericQP}). Hence, a common approach to optimize or approximate
(\ref{equ:genericQP}) is to build strong convex relaxations of $\G$, and in
particular, semidefinite relaxations are a typical choice.

One standard option is the {\em Shor relaxation\/} of $\G$:
\[
    \text{\Shor} :=
    \left\{ X \in \PSD^n :
        \begin{array}{l}
        X e_1 \in \F \\
        J_{d_3} \bullet L_3^T X L_3 \ge 0 \\
        J_{d_4} \bullet L_4^T X L_4 \ge 0 
        \end{array}
    \right\},
\]
where $e_1 \in \R^n$ is the first unit vector and, for any dimension
$d$,
\[
    J_d :=
    \begin{pmatrix} 1 & 0 \\ 0 & -I_{d - 1} \end{pmatrix}
    \in \Sbb^d.
\]
Here, $X e_1 \in \F$ reflects the constraint $x \in \F$ of
(\ref{equ:genericQP}) and is derived from the implications
\[
    x \in \F
    \quad \Longrightarrow \quad
    x_1 \ge 0
    \quad \Longrightarrow \quad
    x_1 x \in \F 
    \quad \Longrightarrow \quad
    X e_1 \in \F
\]
where $e_1$ is the first standard unit vector.
Morevover, the linear constraint $J_{d_k} \bullet L_k^T X L_k \ge 0$ is
derived from the quadratic function defining $L_k^T x \in \SOC^{d_k}$.
The $\Shor$ relaxation with the constraint $X_{11} = 1$ possesses an important property. When the objective
matrix $Q$ is positive semidefinite on the subset of entries $x_2,
\ldots, x_n$ of $x$, then $\Shor$ is already strong enough to optimize the
quadratic problem, i.e., the optimal value of (\ref{equ:genericQP2})
equals the optimal value of (\ref{equ:genericQP}), and the embedded
solution $X e_1$ is optimal.

Next, we introduce an {\em RLT constraint\/} \cite{Sherali.Adams.1999}
using the implications
\[
    \ell_1^T x \ge 0, \ \ell_2^T x \ge 0
    \quad \Longrightarrow \quad
    \ell_1^T x x^T \ell_2 \ge 0
    \quad \Longrightarrow \quad
    \ell_1^T X \ell_2 \ge 0,
\]
and {\em SOCRLT constraints\/}
\cite{Sturm.Zhang.2003,Burer.Anstreicher.2013} using
\[
    \ell_i^T x \ge 0, \ L_k^T x \in \SOC^{d_k}
    \quad \Longrightarrow \quad
    L_k^T x x^T \ell_i \in \SOC^{d_k}
    \quad \Longrightarrow \quad
    L_k^T X \ell_i \in \SOC^{d_k}.
\]
Then defining
\begin{align*}
    \RLT &:= \left\{ X \in \Sbb^n : \ell_1^T X \ell_2 \ge 0 \right\}, \\
    \SOCRLT &:= \left\{ X \in \Sbb^n : L_k^T X \ell_i \in \SOC^{d_k}
    \ \ \forall \ i = 1,2, \ k = 3,4 \right\},
\end{align*}
we arrive at the strengthened relaxation
\[
    \Shor \cap \RLT \cap \SOCRLT.
\]

Another valid constraint can be derived from the fact that $L_k^T x \in
\SOC^{d_k}$ is equivalent to a positive semidefinite constraint (or {\em
linear matrix inequality\/}) \cite{Anstreicher.2017}. We first
write $y := L_3^T x$ and $z := L_4^T x$, so that the SOC constraints in
(\ref{equ:genericQP}) are $y \in \SOC^{d_3}$ and $z \in \SOC^{d_4}$.
Then it is well-known that
\[
    y \in \SOC^{d_3}
    \quad \Longleftrightarrow \quad
    \Arr_{d_3}(y) := 
    \begin{pmatrix}
        y_1 & y_2 & \cdots & y_{d_3} \\
        y_2 & y_1 & & \\
        \vdots & & \ddots & \\
        y_{d_3} & & & y_1
    \end{pmatrix} \in \PSD^{d_3}.
\]
For any dimension $d$, $\Arr_d : \R^d \to \Sbb^d$ is called the
{\em arrow operator\/}. Likewise $z \in \SOC^{d_4}$ if and only
if $\Arr_{d_4}(z) \in \PSD^{d_4}.$ Then, using the fact that the
Kronecker product of PSD matrices is PSD, we have $\Arr_{d_3}(y) \otimes
\Arr_{d_4}(z) \in \PSD^{d_3 d_4}$. Because the left-hand side of this
expression is quadratic in $y$ and $z$, we can equivalently write
\[
    \Arr_{d_3}(y) \otimes \Arr_{d_4}(z)
    =
    (\Arr_{d_3} \boxtimes \Arr_{d_4})(z y^T)
\]
where $\Arr_{d_3} \boxtimes \Arr_{d_4} : \R^{d_4 \times d_3} \to
\Sbb^{d_3 d_4}$ is an operator that is linear in $zy^T$ and defined by
\[
    (\Arr_{d_3} \boxtimes \Arr_{d_4})(z y^T)
    := 
    \begin{pmatrix}
        \Arr_{{\color{black} d_4}}(y_1 z) & \Arr_{{\color{black} d_4}}(y_2 z) & \cdots & \Arr_{{\color{black} d_4}}(y_{d_3} z) \\
        \Arr_{{\color{black} d_4}}(y_2 z) & \Arr_{{\color{black} d_4}}(y_1 z) & & \\
        \vdots & & \ddots & \\
        \Arr_{{\color{black} d_4}}(y_{d_3} z) & & & \Arr_{{\color{black} d_4}}(y_1 z)
    \end{pmatrix}.
\]
Substituting back $y = L_3^T x$ and $z = L_4^T x$, we arrive at the
implication
\[
    (\Arr_{d_3} \boxtimes \Arr_{d_4})(L_4^T xx^T L_3)
    \in \PSD^{d_3 d_4}
    \quad \Longrightarrow \quad
    (\Arr_{d_3} \boxtimes \Arr_{d_4})(L_4^T X L_3)
    \in \PSD^{d_3 d_4}.
\]
Then defining
\[
    \Kron := \left\{
        X \in \Sbb^n : 
        (\Arr_{d_3} \boxtimes \Arr_{d_4})(L_4^T X L_3) \in \PSD^{d_3 d_4}
    \right\},
\]
we have the strongest relaxation thus far:
\[
    \Shor \cap \RLT \cap \SOCRLT \cap \Kron.
\]

In Section \ref{sec:newrelax}, we will derive SDP relaxations for
(\ref{equ:origqp}) using the building blocks just introduced, i.e.,
$\Shor$ combined with $\RLT$, $\SOCRLT$, and $\Kron$. In a sense,
each of the three relaxations $\RLT$, $\SOCRLT$, and $\Kron$ have
the same goal---to combine information from pairs of constraints
in (\ref{equ:genericQP})---but differ due to the structure of the
underlying cones, i.e., the nonnegative orthant, the second-order cone,
and the PSD cone.

\subsection{Results from the literature} \label{sec:existing}

The relaxations \Shor, \RLT, \SOCRLT, and \Kron~just introduced are
known to be quite strong in a number of contexts closely related to
(\ref{equ:origqp}). Specifically, \Shor~is exact for the case of
(\ref{equ:origqp}) for $m=1$ \cite{Rendl.Wolkowicz.1997}, and when
a single linear constraint is added to the ball constraint $\|x\| \le
1$, then $\Shor \cap \SOCRLT$
is exact \cite{Sturm.Zhang.2003}. In addition, for the more general
case
\[
    \left\{ x :
        \begin{array}{l}
        \|x\| \le 1 \\
         0 \le g_i + h_i^T x \ \ \forall \ i=2,\ldots,m
        \end{array}
    \right\}
\]
in which multiple linear constraints are added, $\Shor \cap \RLT
\cap \SOCRLT$ is {\color{black} guaranteed to be exact when} none
of the hyperplanes $0 = g_i + h_i^T x$ intersect inside the ball
\cite{Burer.Yang.2015}. \Kron~has also been used to strengthen
relaxations of (\ref{equ:origqp}) when there is a second ellipsoidal
constraint \cite{Anstreicher.2017}, not necessarily a ball; this is TTRS
mentioned in the Introduction. As a footnote, we are unaware of any case
in which enforcing \Kron~makes a relaxation exact for all
objectives, but nevertheless \Kron~has proven to be a valuable tool for
building strong relaxations.

We are aware of two papers \cite{Jiang.Li.2019,Eltved.Burer.2023}, which
have studied techniques for further strengthening $\Shor \cap \RLT \cap
\SOCRLT \cap \Kron$. Among these, \cite{Eltved.Burer.2023} is more
similar to the current paper. In \cite{Eltved.Burer.2023}, the authors
introduce a class of linear cuts for SDP relaxations of
\[
    \{ x : \|x\| \le 1, \ \|x - c_2\| \le g_2 + h_2^T x \},
\]
and they show how to separate the cuts in polynomial time. Through a
series of computational experiments, the authors also show that their
cuts are effective particularly in lower dimensions, say, for $n \le
10$.

As mentioned in the Introduction, an exact convex relaxation of
(\ref{equ:origqp}) via a disjunctive {\color{black} formulation for the
case $m=2$} was recently given by Kelly et al.~\cite{Kelly.et.al.2022}.
They showed that the feasible region of (\ref{equ:origqp}) can be
written as the union of two special sets:
\[
\{ x : \|x\| \le 1, \ 0 \le g_1 + h_1^T x \}
\quad\quad \text{and} \quad\quad
\{ x : \|x - c_2\| \le \rho_2, \ 0 \le g_2 + h_2^T x \}.
\]
Explicit formulas for $g_i$ and $h_i$ are given in their paper. Since
optimizing over each of these can separately be accomplished with $\Shor
\cap \SOCRLT$ as mentioned above, the authors then use a disjunctive
formulation with two copies of $\Shor \cap \SOCRLT$ to derive an exact
formulation of (\ref{equ:origqp}); see proposition 5 in their paper.
The authors also used similar ideas to derive an exact, disjunctive
formulation for the case $\|x\| \le \min \{1, g + h^T x \}$,
{\color{black} a case we will also consider in Section \ref{sec:extensions}}.

Zhen et al.~\cite{Zhen.et.al.2021} have recently suggested another
technique for combining information from two SOC constraints; see
appendix B of \cite{Zhen.et.al.2021}. We illustrate their idea using
the two constraints $\|x\| \le 1$ and $\|x - c_2\| \le \rho_2$ of
(\ref{equ:origqp}). The fact that $\|uv^T\|_2 = \|u\| \|v\|$,
where $\|\cdot\|_2$ denotes the matrix 2-norm, implies
\[
    \|xx^T - xc^T \|_2 = \|x(x - c)^T\|_2
    = \|x\| \|x - c_2\| \le 1 \cdot \rho_2 = \rho_2,
\]
which can be linearized
\[
    \|X - xc^T\|_2 \le \rho_2
    \quad\quad \Longleftrightarrow \quad\quad
    \begin{pmatrix}
        \rho_2 I_n & X - xc^T \\
        X - cx^T & I_n
    \end{pmatrix} \in \PSD^{2n}.
\]
In our experiments in Sections \ref{sec:newrelax}, we added this
constraint to $\Shor \cap \RLT \cap \SOCRLT \cap \Kron$, but it did
not provide added strength on our test instances. Although we do not
consider this valid constraint further in this paper, investigating its
precise relationship with existing constraints remains an interesting
avenue for research.

\subsection{Measuring the quality of a relaxation} \label{sec:meas}

Following the notation of Section \ref{sec:build}, let $\Rc$ be a given
semidefinite relaxation of the conic hull $\G$, which is at least as
strong as $\Shor$, i.e., $\G \subseteq \Rc \subseteq \Shor$. The SDP
relaxation corresponding to $\Rc$ is
\[
    r^* := \min \left\{ Q \bullet X : X \in \Rc, \ X_{11} = 1 \right\},
\]
and we let $X^*$ denote an optimal solution.

We can assess the quality of the relaxation by comparing $r^*$ to 
any readily available feasible value $v = x^T Q x$, where $x
\in \F$ with $x_1 = 1$ is some feasible point. In particular, as mentioned in Section
\ref{sec:build}, $X^* e_1$ is an embedded feasible solution, but there
are often multiple methods for obtaining a good feasible value $v$ in
practice, e.g., using a rounding procedure from $X^*$ or some other type of
heuristic. Our primary measure of relaxation quality will
be the {\em relative gap\/} between $r^*$ and $v$ defined as follows:
\begin{equation} \label{equ:relgap}
    \text{relative gap} := \frac{ v - r^* }{ \max \{ 1, \tfrac12 |v + r^*| \} }.
\end{equation}

A secondary measure of relaxation quality is the so-called {\em
eigenvalue ratio\/} of $X^*$. By construction, if the rank of $X^*$
is 1, then $X^* = x^*(x^*)^T$ for some optimal solution $x^*$ of
(\ref{equ:genericQP}). Of course, in practice $X^*$ will most likely not
be exactly rank-1, but it may be close numerically. The eigenvalue ratio
tries to capture exactly how close. Specifically,
\[
    \text{eigenvalue ratio} := \frac{ \lambda_1[X^*] }{ \lambda_2 [X^*] },
\]
where $\lambda_1[X^*]$ and $\lambda_2 [X^*]$ are the largest and
second-largest eigenvalues of $X^*$. Generally speaking, the higher
the eigenvalue ratio, the closer $X^*$ is to being rank-1.

In the computational results of Sections
\ref{sec:newrelax}--\ref{sec:extensions}, we will say that an instance
is {\em solved\/} by a relaxation if both of the following two
conditions are satisfied:

\begin{itemize}

\item the relative gap between $r^*$ and the feasible value $v$, which
comes from the solution $x := X^* e_1$ embedded in $X^*$, is less than
$10^{-4}$;

\item the eigenvalue ratio of $X^*$ is greater than $10^4$.

\end{itemize}

\noindent Similar definitions of the term {\em solved\/} have been used in
\cite{Burer.Anstreicher.2013,Almaadeed.et.al.2022,Eltved.Burer.2023,Consolini.Locatelli.2021}.
Strictly speaking, a small relative gap is enough to verify approximate
optimality, but we also require a large eigenvalue ratio in order to
bolster our confidence in the numerical results. It should also be noted
that, for randomly generated instances such as those investigated in
Section \ref{sec:newrelax}, small relative gaps and large eigenvalue
ratios are typically highly positively correlated.

\section{A New Relaxation} \label{sec:newrelax}

In this section, we tailor the ideas of Section \ref{sec:background} to
derive a new relaxation of (\ref{equ:origqp}), prove it is exact for
$m=2$, and investigate its empirical performance for larger $m$.

\subsection{An existing relaxation}

As
discussed in Section \ref{sec:background}, we can combine existing
techniques to build a first relaxation of (\ref{equ:origqp}). Consider the feasible set of
(\ref{equ:origqp}):
\begin{equation} \label{equ:groundset}
    \left\{ x \in \R^n :
    \| x - c_i \| \le \rho_i \ \ \forall \ i = 1,\ldots,m
    \right\}.
\end{equation}
We first homogenize (\ref{equ:groundset}) by introducing $\alpha \in
\R$:
\begin{align*} 
    \widetilde \F
    &:= \left\{ {\alpha \choose x} \in \R^{n+1} : 
    \begin{array}{l}
    \|x - \alpha c_i \| \le \rho_i \alpha \ \ \forall \ i = 1,\ldots,m
    \end{array}
    \right\} \\
    &:= \left\{ \widetilde w \in \R^{n+1} : 
        \widetilde L_i^T w \in \SOC^{n+1}
        \quad \forall \ i =1,\ldots,m
    \right\},
\end{align*}
where $\widetilde w := {\alpha \choose x} \in \R^{n+1}$
and
\[
    \widetilde{L}_i := \begin{pmatrix} g_i & -c_i^T \\ h_i & I_n \end{pmatrix}
    \in \R^{(n + 1) \times (n + 1)} \ \ \forall \ i=1,\ldots,m.
\]
Note that $\alpha \ge 0$ is ensured because each $\rho_i > 0$.
Then, in accordance with Section \ref{sec:background}, we define
the following relaxation of $\widetilde\G := \cone \{ \widetilde w
\widetilde w^T \in \Sbb^{n+1} : \widetilde w \in \widetilde \F \}$:
\begin{align*}
    \widetilde \Rc &:= \Shor \cap \Kron \\
                   &:= \left\{
        \widetilde W \in \PSD^{n+1} :
        \begin{array}{ll}
            J_{n+1} \bullet \widetilde{L}_i^T \widetilde{W} \widetilde{L}_i \ge 0 & \forall \ i=1,\ldots,m \\
            (\Arr_{n+1} \boxtimes \Arr_{n+1})(\widetilde L_k^T \widetilde W
            \widetilde L_i) \in \PSD^{(n + 1)^2} & \forall \ 1 \le i < k \le m
        \end{array}
    \right\}.
\end{align*}
Here, $\RLT$ and $\SOCRLT$ are not applicable because $\widetilde
\F$ contains no explicit linear constraints.

To approximate the problem (\ref{equ:origqp}), we then have the
semidefinite relaxation
\begin{equation}
    \min \left\{ \widetilde Q \bullet \widetilde W : \widetilde W \in
    \widetilde \Rc, \ \widetilde W_{11} = 1 \right\} \tag{\Kron}
\end{equation}
where
\[
    \widetilde Q := \begin{pmatrix} 0 & q^T \\ q & Q \end{pmatrix}.
\]
We use the name ``\Kron'' to remind the reader that $\widetilde \Rc$ is
equivalent to $\Shor \cap \Kron$.

\subsection{Our new relaxation}

As discussed in the Introduction, in hopes of improving $\widetilde
\Rc$, we introduce the auxiliary variable $\beta \in \R$ into
(\ref{equ:groundset}):
\[
    \left\{ {x \choose \beta} \in \R^{n+1} : 
    \begin{array}{l}
    x^T x \le \beta \\
    \beta \le  \rho_i^2 - c_i^T c_i + 2 c_i^T x \ \ \forall \ i = 1, \ldots, m
    \end{array}
    \right\},
\]
where $x^T x \le \beta$ is SOC-representable. Compared to
(\ref{equ:groundset}), this swaps $m-1$ conic constraints for $m$ linear
constraints, hence simplifying the structure of the feasible set.
However, it does not affect optimization over (\ref{equ:groundset})
since $\beta$ is an artificial variable not involved in the objective
$x^T Q x + 2 q^T x$.

Similar to above, we proceed by adding the redundant $\beta \ge 0$ and
homogenizing with $\alpha \ge 0$:
\[
    \F := \left\{
        w \in \R^{n+2} : 
    \begin{array}{l}
        x^Tx \le \alpha \beta, \quad \alpha \ge 0, \quad \beta \ge 0 \\
        \beta \le (\rho_i^2 - c_i^T c_i) \alpha + 2 c_i^T x \ \ \forall \ i = 1,\ldots,m
    \end{array}
    \right\},
\]
where
\[
    w := \begin{pmatrix} \alpha \\ x \\ \beta \end{pmatrix} \in \R^{n+2}.
\]
We next define
\[
    P :=
    \begin{pmatrix}
        {\color{black} 1/2} & 0 & 0 \\
        0 & 0 & I_n \\
        0 & 1 & 0
    \end{pmatrix}
    {\color{black} T_{n+2}}
    \in \R^{(n + 2) \times (n + 2)}
\]
and
\[
    \ell_i := 
    \begin{pmatrix} \rho_i^2 - c_i^T c_i \\ 2 c_i \\ -1 \end{pmatrix} \in \R^{n+2}
    \ \ \forall \ i = 1,\ldots,m
\]
so that $\F$ can be expressed more compactly as
\[
    \F 
    = \left\{
    w \in \R^{n+2} :
    P^T w \in \SOC^{n+2}, \
    \ell_i^T w \ge 0 \ \forall \ i = 1,\ldots,m
    \right\}.
\]
In particular, we have used the equivalence
\[
    \begin{array}{c}
        x^T x \le \alpha \beta \\
        \alpha \ge 0, \ \beta \ge 0
    \end{array}
    \quad \Longleftrightarrow \quad
    \begin{pmatrix} \alpha/2 \\ \beta \\ x \end{pmatrix} \in \RSOC^{n+2}
    \quad \Longleftrightarrow \quad
    P^T w \in \SOC^{n+2}.
\]

Then Section \ref{sec:background} provides the following relaxation of
$\G := \cone \{ ww^T \in \Sbb^{n+2} : w \in \F \}$:
\begin{align*}
    \Rc
    &:= \Shor \cap \RLT \cap \SOCRLT \\
    &:=
    \left\{
        W \in \PSD^{n+2}:
        \begin{array}{l}
            J_{n+2} \bullet P^T W P \ge 0 \\
            \ell_i^T W \ell_k \ge 0 \quad\quad\quad\quad
            \forall \ 1 \le i < k \le m \\
            P^T W\ell_i \in \SOC^{n+2} \, \ \ \forall \ i=1,\ldots,m
        \end{array}
    \right\}.
\end{align*}
Here, the Kronecker approach is not applicable because $\F$ only contains one SOC
constraint. Furthermore, compared to $\widetilde \Rc$, the relaxation
$\Rc$ has only one PSD constraint but contains roughly ${m \choose 2}$
linear constraints and $m$ SOC constraints.

To approximate the problem (\ref{equ:origqp}), we then have the
semidefinite relaxation
\begin{equation}
    \min \left\{ \widehat Q \bullet W : W \in \Rc, \ W_{11} = 1 \right\}, \tag{\Us}
\end{equation}
where
\[
    \widehat Q := \begin{pmatrix} 0 & q^T & 0 \\ q & Q & 0 \\ 0 & 0 & 0 \end{pmatrix}.
\]
We use the name ``\Us'' to denote our relaxation, which is based on the
artificial variable $\beta$. The following small example shows that
\Us~can significantly improve \Kron.

\begin{example} \label{exa:1}
Consider an instance of (\ref{equ:origqp}) with $(n,m) = (2,2)$ and
\[
    Q = \begin{pmatrix} -0.6 & 0 \\ 0 & -0.44 \end{pmatrix}, \quad
    q = \begin{pmatrix} -0.03 \\ 0 \end{pmatrix}, \quad
    c_2 = \begin{pmatrix} -0.3 \\ -0.3 \end{pmatrix}, \quad
    \rho_2 = 1.
\]
\Shor~returns the lower bound $-0.5876$, \Kron~returns $-0.5487$, and \Us~solves the
instance with an optimal value of $-0.54$ and optimal solution
\[
    x^* = \begin{pmatrix}
    -1 \\
     0
    \end{pmatrix}.
\]

In Figure \ref{fig:example_1}, we plot four visualizations of this
example. Each subplot depicts the minimum value of the specified
optimization problem when the variable $x_2$ is fixed to a specific
value in the interval $[-1.0, 0.6]$.

For example, in the top-left subplot, we plot the minimum value of the
original quadratic objective $x^T Q x + 2 q^T x$ over the original
feasible set intersected with the constraint fixing $x_2$ fixed at a value along the horizontal axis.
This creates a one-dimensional graph as $x_2$ varies from $-1.0$ to
$0.6$. In the remaining subplots, the same procedure is applied to
the $\Shor$, $\Kron$, and $\Us$ relaxations and the first subplot is
repeated as a dashed curve for reference.

Overall, these plots depict the lower envelope of the associated
feasible sets projected down to the 2-dimensional space of $x_2$ and
the objective value. As such, they illustrate the nonconvexity of the
original objective as well as the increasing strength of the three
convex relaxations. In particular, $\Shor$ provides a relatively weak
convex underestimator, $\Kron$ provides a tighter underestimator, and
$\Us$ provides the convex envelope.

\end{example}

\begin{figure}
    \centering
    \includegraphics[width=5.5in]{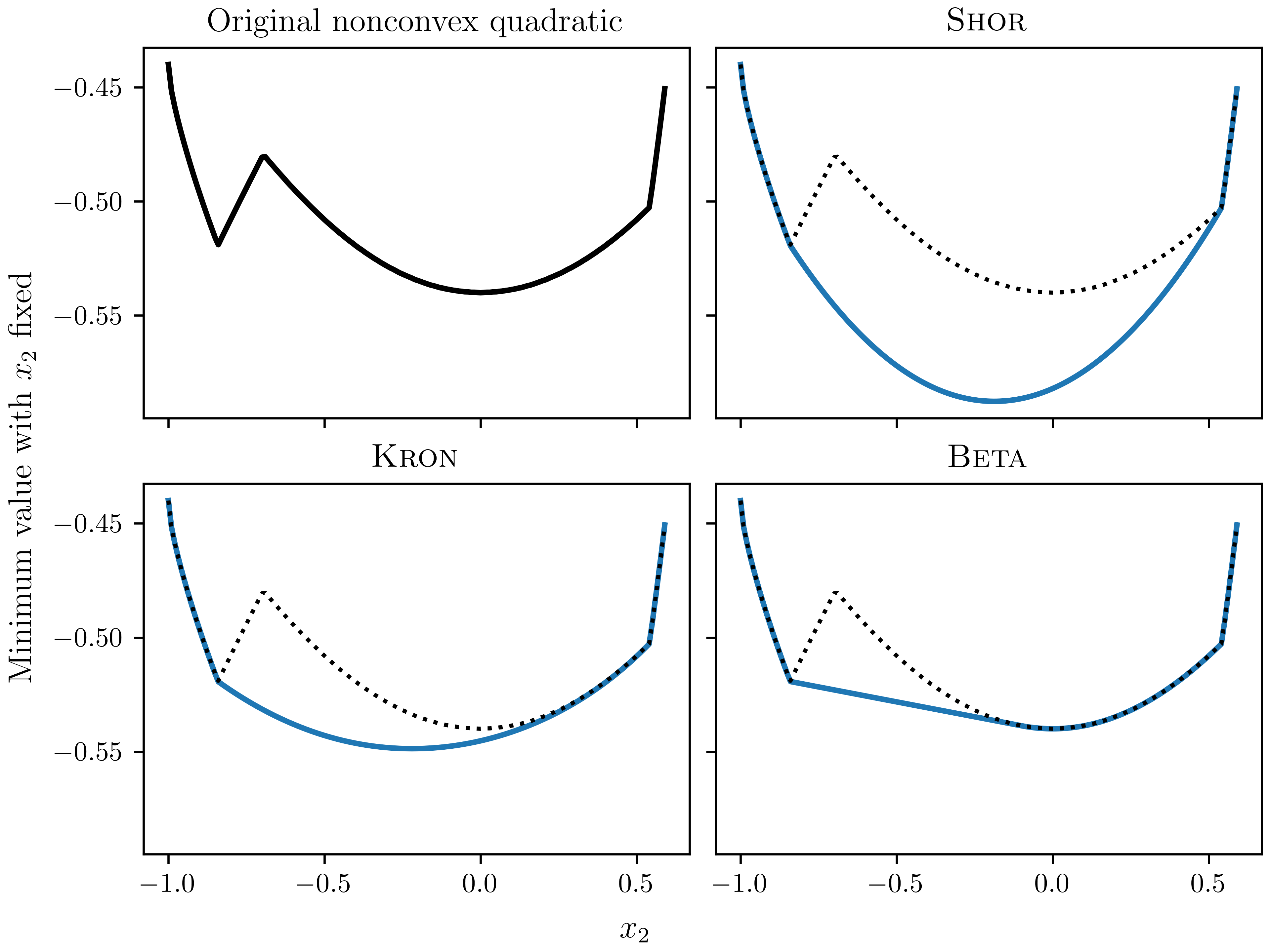}
    \caption{Illustrations of Example \ref{exa:1}}
   \label{fig:example_1}
\end{figure}

\subsection{The exact case $m=2$}

For $m = 2$, the auxiliary variable $\beta$ is inserted between $x^T
x$ and both linear functions $\rho_i^2 - c_i^T c_i + 2 c_i^T x$ for
$i=1,2$. When $x^T x$ is strictly less than both linear functions,
there are multiple values of $\beta$, which are feasible for the same
value of $x$. To remove this ambiguity, we can actually force $\beta
= \min_{i=1,2} \{ \rho_i^2 - c_i^T c_i + 2 c_i^T x \}$ by adding the
quadratic complementarity equation
\[
    (\rho_1^2 - c_1^T c_1 + 2 c_1^T x - \beta)(\rho_2^2 - c_2^T c_2 + 2
    c_2^T x - \beta) = (\ell_1^T w)(\ell_2^T w) = 0
\]
to the definition of $\F$. Formally, we define
\begin{align*}
    \F^0 
    &:= \left\{
    w \in \F :
    (\ell_1^T w) (\ell_2^T w) = 0
    \right\}, \\
    \G^0 &:= \cone \{ W = ww^T : w \in \F^0 \}, \\
    \Rc^0 &:= \{ W \in \Rc : \ell_1^T W \ell_2 = 0 \}.
\end{align*}
Note that optimizing $x^T Q x + 2 q^T x$ over $\F^0 \cap \{ {\color{black} w} : \alpha
= 1 \}$ is equivalent to (\ref{equ:origqp}) because $\beta$ is
artificial. We have the following exactness theorem, whose proof is delayed until Section \ref{sec:proofs}:

\begin{theorem} \label{the:main}
Regarding the ball-constrained set (\ref{equ:groundset}), for $m=2$, it
holds that $\G^0 = \Rc^0$.
\end{theorem}

\noindent This ensures that, for $m=2$, (\ref{equ:origqp}) is equivalent to
\[
    \min \left\{ 
        \widehat Q
        \bullet W
        :
        W \in \Rc, \ W_{11} = 1, \ \ell_1^T W \ell_2 = 0
    \right\} = 
    \min \left\{ 
        \widehat Q
        \bullet W
        :
        W \in \Rc^0, \ W_{11} = 1
    \right\},
\]
that is, equivalent to \Us~with the strengthened constraint $\ell_1^T
W \ell_2 = 0$.

To verify Theorem \ref{the:main} empirically, we generated 1,000
feasible instances of (\ref{equ:origqp}) for each of the dimension
pairs $(n,m) \in \{ (2,2), (4,2), ({\color{black} 6},2) \}$ following
the method of \cite{Burer.Anstreicher.2013}; see section 5.3 of that
paper and the discussion therein. The idea is to first generate and
optimize globally an instance of (\ref{equ:origqp}) with $m=1$ and
then to add a second ball constraint, which cuts off the optimal
solution just calculated. The resulting problem is likely to possess
multiple good candidates for optimal solutions, thus ostensibly
making it a more challenging instance of (\ref{equ:origqp}). We call
these the {\em Martinez instances\/} following the terminology in
\cite{Burer.Anstreicher.2013}.

{\color{black} Among the total 3,000 instances generated, we
specifically excluded ones that were already solved by \Shor. This
was done in order to eliminate the ``easiest'' instances, i.e., the
ones that can already be handled by the simplest relaxation. In fact,
for random instances such as the ones described, the percentage of
instances already solved by Shor increases empirically as $n$
grows. Hence, limiting the experiments to the (relatively small) subset
of instances that are not solved by Shor allows a more focused
comparison of the performance of \Kron~and \Us.}

\begin{table}[t]
\centering
\begin{tabular}{rr|r|rr|rrr}
$n$ & $m$ & \# Instances & \multicolumn{2}{c|}{\# Solved} & \multicolumn{3}{c}{Total Time (s)} \\
         &  & & \Kron & \Us & \Shor & \Kron & \Us \\ \hline
   2 &  2 & 1,000 & 955 & 1,000 & 0.4 & 2.0 & 0.8 \\ 
   4 &  2 & 1,000 & 884 & 1,000 & 0.7 & 13.5 & 1.0 \\ 
   6 &  2 & 1,000 & 882 & 1,000 & 0.9 & 181.4 & 1.6 \\ 
\end{tabular}
\caption{Number of instances of (\ref{equ:origqp}) solved by \Kron~and
\Us~over 3,000 randomly generated Martinez instances, which were not
already solved by \Shor. Instances are grouped by dimensions $(n,m)$.
Also shown are the total time (in seconds) to optimize all instances
for each method in each grouping. Times for \Shor~are also shown for
reference.}
\label{tab:martinez_performance}
\end{table}

Table \ref{tab:martinez_performance} shows the results of our
experiments. We see that, while \Kron~solves many instances, \Us~solves
all instances, as ensured by Theorem \ref{the:main}, in much less
time. In fact, \Us~takes just a bit more time than \Shor.

Eltved and Burer \cite{Eltved.Burer.2023} also optimized instances of
(\ref{equ:origqp}) with $m=2$, and they reported that their method,
which is at least as strong as \Kron, was unable to solve 267 randomly
generated instances with $n$ ranging from 2 to 10; see table 6 therein.
We ran \Us~on these instances and globally solved {\color{black} all} of
them.\footnote{These instances are available at \url{https://github.com/A-Eltved/strengthened_sdr}.}

\subsection{The general case $m \ge 3$} \label{sec:balls}

We now investigate the use of \Kron~and \Us~to approximate instances
of (\ref{equ:origqp}) with $m \ge 3$. To this end, we generated random
instances of the following problem, which we call {\em max-norm\/} and
which has recently been studied in \cite{Costandin.2023}:

\begin{quote}
In $\R^n$, given a point $p$ and $m$ balls containing the origin, find a
point in the intersection of the balls with maximum distance to $p$.
\end{quote}

\noindent This is an instance of (\ref{equ:origqp}) with $Q = -I$ and $q
= p$. We have chosen this class because it seems to be an interesting
geometric problem, which is also NP-hard \cite{Costandin.2023}.

Specifically, the first ball is taken to be the unit ball, and the
remaining $m-1$ balls are generated with random centers $c_i$ in the
unit ball. Then the random radii $\rho_i$ for $i=2,\ldots,m$ are
generated each in $\|c_i\| + U(0,1.5)$, where $U(0,1.5)$ is continuous
uniform between 0 and 1.5. In particular, this guarantees that $x=0$
is in the resulting feasible set. Finally, $p$ is generated
uniformly in the ball of radius 4 centered at the origin, and as
such could be either inside or outside the feasible set. As in the
previous subsection, we exclude instances that are solved by \Shor~in
order to remove the easiest instances and focus on the performance of
\Us~relative to \Kron.

We also examined the so-called {\em gap closure\/} for each relaxation.
For a given instance, let $s^*$ be the optimal value of \Shor, and
let $v$ be the best (i.e., minimum) feasible value from the solutions
embedded within the three relaxations \Shor, \Kron, and \Us~at
optimality. Then
\begin{align*}
    \text{gap closure for \Kron}
    &:= \left( \frac{\text{[optimal value for \Kron]} - s^*}{v - s^*} \right) \times 100\% \\
    \text{gap closure for \Us}
    &:= \left( \frac{\text{[optimal value for \Us]} - s^*}{v - s^*} \right) \times 100\%
\end{align*}
A gap closure of 0\% means that the relaxation did not improve upon
\Shor, and a gap closure of 100\% means that the relaxation was exact.

Tables \ref{tab:maxnorm_performance} and \ref{tab:maxnorm_gapclosed}
show the results of \Kron~and \Us~on 1,000 max-norm instances for
{\color{black} three} pairs of dimensions $(n,m)$. These results show
clearly that \Us~significantly outperforms \Kron~in terms of both number
of instances solved and the average gap closed. {\color{black} In terms
of timings, we see that \Us~requires significantly less time.} Furthermore,
no instances were solved by \Kron~and simultaneously
unsolved by \Us. This suggests empirically that \Us~is always at least
as strong as \Kron, although we do not have a formal proof establishing
this.

\begin{table}[t]
\centering
\begin{tabular}{rr|r|rr|rrr}
$n$ & $m$ & \# Instances & \multicolumn{2}{c|}{\# Solved} & \multicolumn{3}{c}{Total Time (s)} \\ 
         &  & & \Kron & \Us & \Shor & \Kron & \Us \\ \hline
   2 &  5 & 1,000 & 208 & 977 & 0.3 & 14.7 & 1.1 \\ 
   2 &  9 & 1,000 & 412 & 973 & 0.4 & 153.1 & 1.9 \\ 
   4 &  9 & 1,000 &  2 & 908 & 0.8 & 2435.6 & 3.9 \\ 
\end{tabular}
\caption{Number of max-norm instances solved by \Kron~and \Us~on 3,000
instances unsolved by \Shor. Instances are grouped by dimensions
$(n,m)$. Also included are the total time (in seconds) to optimize all
instances for each method in each grouping.}
\label{tab:maxnorm_performance}
\end{table}

\begin{table}[t]
\centering
\begin{tabular}{cc|r|rr}
    \multicolumn{2}{c|}{Solution Status} & \# Instances & \multicolumn{2}{c}{Avg Gap Closed} \\ 
    \Kron & \Us & & {\sc Kron} & \Us \\ 
  \hline
  unsolved & unsolved & 142 & 6\% & 29\% \\ 
  unsolved & solved & 2236 & 30\% & 100\% \\ 
  solved & unsolved & 0 & - & - \\
  solved & solved & 622 & 100\% & 100\% 
\end{tabular}
\caption{Average gap closures for Table \ref{tab:maxnorm_performance}.
Instances are grouped by solution status for \Kron~and \Us.}
\label{tab:maxnorm_gapclosed}
\end{table}

As a final experiment, to judge the time required to optimize \Us~as a
function of $n$ and $m$, we computed the mean time to optimize a single
random instance for 36 different combinations of $(n,m)$ ranging from
$(2,2)$ to $(64,64)$. For each mean calculation, the sample size was
100. From Table \ref{tab:maxnorm_larger}, we see that our relaxation
can be optimized in tens of seconds for modest values of $n$ and $m$.

\begin{table}[ht]
\centering
\begin{tabular}{r|rrrrrr}
    & \multicolumn{6}{c}{$m$} \\ 
$n$ & 2 & 4 & 8 & 16 & 32 & 64 \\ 
  \hline
   2 & 0.0 & 0.0 & 0.0 & 0.0 & 0.0 & 0.5 \\ 
   4 & 0.0 & 0.0 & 0.0 & 0.0 & 0.1 & 1.0 \\ 
   8 & 0.0 & 0.0 & 0.0 & 0.0 & 0.2 & 2.4 \\ 
  16 & 0.0 & 0.0 & 0.0 & 0.1 & 0.5 & 4.2 \\ 
  32 & 0.0 & 0.0 & 0.1 & 0.3 & 1.3 & 8.3 \\ 
  64 & 0.1 & 0.2 & 0.4 & 1.0 & 3.9 & 40.6
\end{tabular}
\caption{Average time (in seconds) to optimize the \Us~relaxation of 100
random max-norm instances for various combinations of $(n,m)$.}
\label{tab:maxnorm_larger}
\end{table}

\section{Extensions} \label{sec:extensions}

In this section, we provide two extensions, which use the $\beta$ idea
described in Section \ref{sec:newrelax}.

\subsection{Linear upper bounds on the norm}

Consider nonconvex quadratic programming over the set
\begin{equation} \label{equ:groundset_}
    \left\{ x \in \R^n :
        \begin{array}{l}
            \|x\| \le 1 \\
            \|x\| \le g + h^T x 
        \end{array}
    \right\},
\end{equation}
where $g$ is a scalar and $h$ is a vector. We assume
(\ref{equ:groundset_}) is nonempty, but otherwise no
particular assumptions are made on $g$ and $h$. Kelly et
al.~\cite{Kelly.et.al.2022} studied this case using a disjunctive SDP
approach; see also the discussion in Section \ref{sec:existing}.

As before, we introduce the auxiliary variable $\beta \in \R$ into
(\ref{equ:groundset_}), homogenize, enforce complementarity, and
consider the cone
\[
    \F ^0
    = \left\{
    w \in \R^{n+2} :
    P^T w \in \SOC^{n+1}, \
    \ell_i^T w \ge 0 \ \forall \ i = 1,2, \
    (\ell_1^T w)(\ell_2^T w) = 0
    \right\},
\]
where
\[
    P :=
    \begin{pmatrix}
        0 & 0 \\
        0 & I_n \\
        1 & 0
    \end{pmatrix}
    \in \R^{(n + 2) \times (n + 1)},
    \quad
    \ell_1 := \begin{pmatrix} 1 \\ 0 \\ -1 \end{pmatrix} \in \R^{n+2},
    \quad
    \ell_2 := \begin{pmatrix} g \\ h \\ -1 \end{pmatrix} \in \R^{n+2}.
\]
Then Section \ref{sec:background} provides the following relaxation of
$\G^0 := \cone \{ ww^T \in \Sbb^{n+1} : w \in \F^0 \}$:
\begin{align*}
    \Rc^0
    &:=
    \left\{
        W \in \PSD^{n+2}:
        \begin{array}{l}
            J_{n+1} \bullet P^T W P \ge 0 \\
            \ell_1^T W \ell_2 = 0 \\
            P^T W\ell_i \in \SOC^{n+1} \ \ \forall \ i=1,2 
        \end{array}
    \right\}.
\end{align*}
As with Theorem \ref{the:main}, we can prove that $\Rc^0$ captures the
convex hull exactly.

\begin{theorem} \label{the:extension}
Regarding the feasible set (\ref{equ:groundset_}), it holds that $\G^0 =
\Rc^0$.
\end{theorem}

\begin{proof}
    The same proof for Theorem \ref{the:main} handles this case as well.
\end{proof}

\subsection{Two Trust Region Subproblem}

As another extension, we consider the problem of minimizing a nonconvex
quadratic over the intersection of two full-dimensional ellipsoids, i.e., the TTRS
mentioned in the Introduction:
\begin{equation} \label{equ:ttrs} 
    \min_{x \in \R^n}
    \left\{
        x^T Q x + 2 \, q^T x :
        \begin{array}{l}
            \|H_i x - c_i \| \le \rho_i \quad \forall \ i=1,2
        \end{array}
    \right\},
\end{equation}
where $H_i$ are full-rank matrices and $c_i,\rho_i$ are as in
(\ref{equ:origqp}). By a change of variables, we may assume without
loss of generality that $H_1 = I_n$, $c_1 = 0$, and $\rho_1 = 1$, i.e.,
that the first ellipsoid is the unit ball. As with the ball case, the
relaxation $\Shor \cap \Kron$ is applicable; see, in
particular, the study by Anstreicher \cite{Anstreicher.2017}.

By another change of variables, we can further assume that $H_2$ is
diagonal without loss of generality, in which case the feasible set can
be rewritten as
\[
    \left\{ x \ : \ x^T x \le 1, \ \ 
    \sum_{j=1}^n [H_2]_j^2 x_j^2 \le \rho_2^2 - c_2^T c_2 + 2 c_2^T H_2 x.
    \right\}
\]
We next introduce $n$ new artificial variables $\beta_j$ satisfying
$x_j^2 \le \beta_j$ and then homogenize and enforce complementarity:
\[
    \F := \left\{
        w  = \begin{pmatrix} \alpha \\ x \\ \beta \end{pmatrix} 
        \in \mathbb{R}^{2n + 1} 
        \ : \
        \begin{array}{l}
        P_j^T w \in \SOC^3 \quad \forall \ j=1,\ldots,n \\
        \ \ell_i^T w \ge 0 \quad \forall \ i =1,2 \\
        (\ell_1^T w)(\ell_2^T w) = 0
        \end{array}
    \right\},
\]
where $e_j \in \R^n$ is the $j$-th standard unit vector,
\[
    P_j :=
    \begin{pmatrix}
        1/2 & 0 & 0 \\
        0 & 0 & e_j \\
        0 & e_j & 0
    \end{pmatrix}
    T_3
    \in \R^{(2n+1) \times 3} \ \ \forall \ j =1,\ldots,n,
\]
and
\[
    \ell_i := 
    \begin{pmatrix} \rho_i^2 - c_i^T c_i \\ 2 H_i c_i \\ -\diag(H_i^2) \end{pmatrix} \in \R^{2n+1}
    \ \ \forall \ i=1,2.
\]
Note that, this formulation has $2n+1$ variables, $n$
SOC constraints of size 3, two linear constraints, and one
complementarity constraint. Following the development in Section
\ref{sec:background}, our relaxation is then
\begin{align*}
    \Rc^0
    &:=
    \left\{
        W \in \PSD^{2n + 1}:
        \begin{array}{l}
            J_3 \bullet P_j^T W P_j \ge 0 \ \ \forall \ j = 1,\ldots,n \\
            \ell_1^T W \ell_2 = 0 \\
            P_j^T W\ell_i \in \SOC^3 \ \ \forall \ i=1,2, \ j=1,\ldots,n \\
            (\Arr_3 \boxtimes \Arr_3)(P_k^T W P_j) \in \PSD^{9} \ \
            \forall \ 1 \le j < k \le n
        \end{array}
    \right\}.
\end{align*}

The paper \cite{Burer.Anstreicher.2013} studied a relaxation of
(\ref{equ:ttrs}), which was based on rewriting the unit ball as the
intersection of a semi-infinite number of half-spaces:
\[
    \left\{ x : \begin{array}{l} \|x\| \le 1 \\ \|H_2 - c_2\| \le \rho_2 \end{array} \right\} 
    = \left\{ x : \begin{array}{l} u^T x \le 1 \ \forall \ \|u\| = 1 \\ \|H_2 - c_2\| \le \rho_2 \end{array}\right\}.
\]
In doing so, the authors constructed a relaxation, which was a
semi-infinite analog of $\Shor \cap \RLT \cap \SOCRLT$---as introduced
in Section \ref{sec:background}---which could nevertheless be optimized
in polynomial-time in this special case. As a byproduct of this
research, the authors introduced a collection of 212 instances of
(\ref{equ:ttrs}) with sizes $n \in \{5,10,20\}$, which were not solved
by $\Shor \cap \RLT \cap \SOCRLT$.\footnote{These instances are available
at \url{https://github.com/sburer/soctrust}.}

These 212 instances have been revisited by Anstreicher
\cite{Anstreicher.2017}, who showed that $\Shor \cap \Kron$ solves 77.
Another approach by Yang and Burer \cite{Burer.Yang.2015} solves 29
instances. Finally, a recent cut-generation approach by Consolini and
Locatelli \cite{Consolini.Locatelli.2021} solves 211, i.e., all but one
of the 212 instances. We applied our relaxation $\Rc^0$ to the same
instances and solved all 212. The largest instances (size $n=20$) were
solved in 0.374 seconds on average.

\section{Proof of Theorem \ref{the:main}} \label{sec:proofs}

In this section, we prove Theorem \ref{the:main} from Section
\ref{sec:newrelax}, which states that
\[
    \G^0 := 
    \cone \left\{
        ww^T \in \Sbb^{n+2} :
        \begin{array}{l}
            P^T w \in \SOC^{n+2} \\
            \ell_i^T w \ge 0 \ \ \forall \ i=1,2 \\
            (\ell_1^T w)(\ell_2^T w) = 0
        \end{array}
    \right\}
\]
equals 
\[
    \Rc^0 :=
    \left\{
        W \in \PSD^{n+2}:
        \begin{array}{l}
            J_{n+2} \bullet P^T W P \ge 0 \\
            P^T W\ell_i \in \SOC^{n+2} \ \ \forall \ i=1,2 \\
            \ell_1^T W \ell_2 = 0
        \end{array}
    \right\}.
\]
A closely related result is {\color{black} T}heorem 2.7 in \cite{Ye.Zhang.2003a}, but
our proof follows a technique from \cite{Burer.2015}.

We first prove some lemmas. To simplify notation, let $J := J_{n+2}$
and $\SOC := \SOC^{n+2}$ throughout the rest of this section. Our first
lemma states some straightforward properties of $J$.

\begin{lemma} \label{lem:propJ}
Regarding $J$, it holds that:
\begin{enumerate}
    \item[(i)] If $v \in \SOC$, then $\myJ v \in \SOC$, and hence
        $w^T \myJ  v \ge 0$ for all $w \in \SOC$.
    \item[(ii)] If $v \in \bd(\SOC)$, then $v^T \myJ  v = 0$.
\end{enumerate}
\end{lemma}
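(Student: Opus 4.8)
The plan is to work directly from the coordinate definition of the second-order cone and the explicit block structure of $J$. Write any $v \in \R^{n+1}$ as $v = (v_1, \bar v)$ with $v_1 \in \R$ and $\bar v \in \R^n$, so that $v \in \SOC^{n+1}$ means precisely $\|\bar v\| \le v_1$ (which in particular forces $v_1 \ge 0$). Since $J = \bigl(\begin{smallmatrix} 1 & 0 \\ 0 & -I_n \end{smallmatrix}\bigr)$, we have $Jv = (v_1, -\bar v)$. Both parts of the lemma reduce to one-line computations once this notation is set up.

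For part (i), I would first observe that $\|-\bar v\| = \|\bar v\| \le v_1$, so $Jv$ satisfies the defining inequality of $\SOC^{n+1}$ and hence $Jv \in \SOC^{n+1}$. For the ``hence'' clause, I would invoke the self-duality of the second-order cone: for any $w, u \in \SOC^{n+1}$ one has $w^T u \ge 0$, which follows from $w^T u = w_1 u_1 + \bar w^T \bar u \ge w_1 u_1 - \|\bar w\|\,\|\bar u\| \ge w_1 u_1 - w_1 u_1 = 0$ using Cauchy--Schwarz together with $\|\bar w\| \le w_1$ and $\|\bar u\| \le u_1$. Applying this with $u = Jv \in \SOC^{n+1}$ gives $w^T J v \ge 0$ for all $w \in \SOC^{n+1}$.

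For part (ii), I would simply compute $v^T J v = v_1^2 - \|\bar v\|^2$. The condition $v \in \bd(\SOC^{n+1})$ means the defining inequality holds with equality, i.e.\ $\|\bar v\| = v_1$, and substituting gives $v^T J v = v_1^2 - v_1^2 = 0$.

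There is no substantial obstacle here: the lemma is elementary, and each claim is a short calculation. The only point requiring a named external fact is the self-duality of $\SOC^{n+1}$ used in the ``hence'' of part (i); I would either cite it as standard or, to keep the argument self-contained, include the two-line Cauchy--Schwarz verification above. Everything else follows immediately from the coordinate description of the cone and its boundary.
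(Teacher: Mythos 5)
Your proof is correct. The paper itself offers no proof of this lemma---it is stated as a collection of ``straightforward properties'' of $J$ and left unjustified---and your coordinate computation (writing $v = (v_1,\bar v)$, noting $Jv = (v_1,-\bar v)$, and invoking self-duality of $\SOC^{n+1}$ via Cauchy--Schwarz for the ``hence'' clause) is exactly the standard verification the authors evidently had in mind.
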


\noindent The next two lemmas are {\color{black} analogous} to the
exactness results of Shor~and $\Shor \cap \SOCRLT$ discussed in Section
\ref{sec:existing}.

\begin{lemma} \label{lem:trs}
$\cone \{ ww^T : P^T w \in \SOC \} = \{ W \in \PSD^{n+2} : \myJ
\bullet P^T W P \ge 0 \}.$
\end{lemma}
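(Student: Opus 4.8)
\emph{Plan.} Write the left-hand cone as $\mathcal{C}$ and the right-hand set as $\mathcal{D}$. The inclusion $\mathcal{C}\subseteq\mathcal{D}$ is the easy direction: each generator $ww^{T}$ is PSD, and the defining functional evaluates to $J\bullet P^{T}(ww^{T})P=(P^{T}w)^{T}J(P^{T}w)$; since $P^{T}w\in\SOC^{n+1}$ implies $(P^{T}w)^{T}J(P^{T}w)\ge 0$ (e.g.\ by Lemma~\ref{lem:propJ}(i) with both of its vectors taken to be $P^{T}w$), every generator lies in $\mathcal{D}$. Because $\mathcal{D}$ is a convex cone, it contains all finite sums of such generators, so $\mathcal{C}\subseteq\mathcal{D}$.

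The substance is the reverse inclusion $\mathcal{D}\subseteq\mathcal{C}$, where I would convert the single aggregate inequality defining $\mathcal{D}$ into a term-by-term inequality. The key algebraic observation is $J\bullet P^{T}WP=(PJP^{T})\bullet W$, so fixing $M:=PJP^{T}\in\Sbb^{n+2}$, membership in $\mathcal{D}$ reads $W\succeq 0$ and $M\bullet W\ge 0$. Given such a $W$, factor $W=VV^{T}$ with $V\in\R^{(n+2)\times r}$ and $r=\rank W$, and form the $r\times r$ symmetric matrix $S:=V^{T}MV$, whose trace equals $M\bullet W\ge 0$. Any orthogonal congruence $U:=VO$ leaves the matrix unchanged, $UU^{T}=VOO^{T}V^{T}=W$, while replacing $S$ by the similar matrix $O^{T}SO$; thus it suffices to choose $O$ so that every diagonal entry of $O^{T}SO$ is nonnegative.

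For this I would invoke the equal-diagonal (Schur--Horn type) fact that every real symmetric matrix is orthogonally similar to a matrix all of whose diagonal entries equal the average of its eigenvalues. Applying it to $S$ yields $O$ with $(O^{T}SO)_{kk}=\tr(S)/r\ge 0$ for each $k$. Writing $u_{1},\dots,u_{r}$ for the columns of $U=VO$, we then have $W=\sum_{k}u_{k}u_{k}^{T}$ with $(P^{T}u_{k})^{T}J(P^{T}u_{k})=u_{k}^{T}Mu_{k}=(O^{T}SO)_{kk}\ge 0$. It remains to promote each $u_{k}$ to a legitimate generator, i.e.\ to secure $P^{T}u_{k}\in\SOC^{n+1}$ rather than merely the quadratic inequality. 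Writing $P^{T}u_{k}=(\beta_{k},x_{k})$, that inequality reads $\beta_{k}^{2}\ge\|x_{k}\|^{2}$; if $\beta_{k}\neq 0$ I replace $u_{k}$ by $-u_{k}$ if needed to make $\beta_{k}\ge 0$ (which does not alter $u_{k}u_{k}^{T}$), giving $\|x_{k}\|\le\beta_{k}$, while if $\beta_{k}=0$ the inequality forces $x_{k}=0$ and hence $P^{T}u_{k}=0\in\SOC^{n+1}$ trivially. Either way $P^{T}u_{k}\in\SOC^{n+1}$, so $W\in\mathcal{C}$.

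The main obstacle I anticipate is the equal-diagonal orthogonal congruence step, which is precisely where the single global inequality $M\bullet W\ge 0$ is redistributed as $r$ individual nonnegative quadratic forms; this is the heart of the rank-one decomposition in the spirit of \cite{Ye.Zhang.2003a,Burer.2015}, and it can be carried out either by citing the Schur--Horn theorem or by an explicit Givens-rotation argument that, whenever two diagonal entries of $O^{T}SO$ have opposite signs, rotates the corresponding coordinate pair to drive one entry to zero and strictly decrease the count of negative diagonal entries. The sign-fixing step is routine by comparison.
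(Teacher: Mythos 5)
Your proof is correct, but it takes a genuinely different route from the paper. The paper disposes of this lemma in one sentence: every extreme ray of the right-hand cone has rank one, via Pataki's bound on the rank of extreme matrices of SDP-representable sets (an extreme ray with $J \bullet P^T W P > 0$ must be extreme for $\PSD^{n+2}$ itself, hence rank one, while an extreme ray lying in the slice $J \bullet P^T W P = 0$ has rank one by the bound applied to that single equality); each rank-one ray $ww^T$ then satisfies $(P^T w)^T J (P^T w) \ge 0$ and so, after a sign flip, $P^T w \in \SOC^{n+1}$. Your argument is instead a constructive rank-one decomposition in the style of Sturm--Zhang: factor $W = VV^T$, apply an orthogonal congruence making the diagonal of $V^T (P J P^T) V$ constant---hence nonnegative, since its trace equals $J \bullet P^T W P \ge 0$---and conclude that every column of $VO$ is itself a valid generator after the same sign-fixing step. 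What the paper's route buys is brevity, at the cost of invoking Pataki's theorem and the fact that a closed pointed convex cone is generated by its extreme rays; what your route buys is an explicit, algorithmic decomposition of an arbitrary feasible $W$ into generators, self-contained modulo the equal-diagonal fact (Horn's theorem, or rotations). One small repair is needed in your Givens-rotation variant: pairing a negative diagonal entry with a positive one and rotating one of them to zero need \emph{not} strictly decrease the number of negative diagonal entries, because the surviving entry inherits the pair's sum, which may itself be negative. The correct termination measure is the number of zero diagonal entries, which strictly increases at each step (rotations in the $(i,j)$ plane leave all other diagonal entries untouched); once no opposite-sign pair remains, the diagonal entries are either all nonnegative or all nonpositive, and in the latter case the trace condition forces them all to vanish, so nonnegativity holds either way.
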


\begin{proof}
This can be proven, for example, by showing that all extreme rays of
the right-hand-side set have rank-1, which can in turn be shown via
Pataki's bound on the rank of extreme matrices of SDP-representable sets
\cite{Pataki.1998}.
\end{proof}

\begin{lemma} \label{lem:basecase}
Fix $i=1$ or $i=2$. It holds that
\[
    \cone \left\{ ww^T :
        \begin{array}{l}
            P^T w \in \SOC \\
            \ell_i^T w \ge 0
        \end{array}
    \right\} =
    \left\{
        W \in \PSD^{n+2} :
        \begin{array}{l}
            \myJ  \bullet P^T W P \ge 0 \\
            P^T W \ell_i \in \SOC
        \end{array}
    \right\}.
\]
\end{lemma}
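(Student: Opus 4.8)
The plan is to prove the two inclusions separately, with $\subseteq$ being routine and $\supseteq$ carrying all the content. For $\subseteq$, by conic linearity it suffices to check a single generator $ww^T$ with $P^T w \in \SOC^{n+1}$ and $\ell_i^T w \ge 0$. The inequality $\myJ \bullet P^T ww^T P = (P^T w)^T \myJ (P^T w) \ge 0$ follows from Lemma \ref{lem:propJ}(i) (membership in $\SOC^{n+1}$ forces the quadratic form $\myJ$ to be nonnegative), while $P^T ww^T \ell_i = (\ell_i^T w)(P^T w)$ is a nonnegative multiple of a point of $\SOC^{n+1}$ and hence lies in $\SOC^{n+1}$. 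Summing over generators preserves both properties. For $\supseteq$, I would first reduce to extreme rays: the right-hand side is a closed convex cone contained in $\PSD^{n+2}$, so it is pointed and has a compact base, and therefore equals the conic hull of its extreme rays. It thus suffices to show that every extreme ray $W$ has rank one. Once that is known, writing $W = ww^T$, the constraint $(P^T w)^T \myJ (P^T w) \ge 0$ places $P^T w$ in $\SOC^{n+1} \cup (-\SOC^{n+1})$, so after possibly flipping $w \mapsto -w$ we may assume $P^T w \in \SOC^{n+1}$; then $P^T W \ell_i = (\ell_i^T w)(P^T w) \in \SOC^{n+1}$ forces $\ell_i^T w \ge 0$ whenever $P^T w \ne 0$ (by inspecting the first coordinate), and when $P^T w = 0$ the apex is negation-invariant so the sign of $\ell_i^T w$ can be fixed directly. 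Hence a rank-one extreme ray is automatically a generator of the left-hand side.

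To prove that an extreme ray $W$ is rank one, I would argue by contradiction with a symmetric perturbation: if $\rank(W) = r \ge 2$, I seek $D \notin \Span\{W\}$ with $W \pm \epsilon D$ feasible for small $\epsilon > 0$, which writes $W$ as the midpoint of two non-proportional feasible rays and contradicts extremality. Imposing $\Range(D) \subseteq \Range(W)$ keeps $W \pm \epsilon D \succeq 0$ and leaves a $\binom{r+1}{2}$-dimensional space of directions. The active constraints impose few linear equations: if $\myJ \bullet P^T W P = 0$ is active it contributes one equation; and if $u := P^T W \ell_i$ lies on $\bd(\SOC^{n+1}) \setminus \{0\}$, staying in the cone under both signs forces the tangency equation $u^T \myJ (P^T D \ell_i) = 0$ together with $(P^T D \ell_i)^T \myJ (P^T D \ell_i) \ge 0$. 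The latter curvature inequality is automatic, since at a nonzero boundary point of $\SOC^{n+1}$ the restriction of $\myJ$ to the tangent hyperplane $\{d : u^T \myJ d = 0\}$ is positive semidefinite (its null direction being $u$ itself, cf.\ Lemma \ref{lem:propJ}(ii)). When $u \in \myint(\SOC^{n+1})$ there is no SOC equation at all. In these configurations the direction space has dimension at least $\binom{r+1}{2} - 2 \ge 1$, and a short check that this strictly exceeds $\dim \Span\{W\} = 1$ (using that $W$ itself satisfies the active equations) produces the required $D$.

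The main obstacle is the complementary, or apex, case $u = P^T W \ell_i = 0$, where forcing $W \pm \epsilon D$ to remain in $\SOC^{n+1}$ would demand the full system $P^T D \ell_i = 0$ ($n+1$ equations) and the naive dimension count breaks down. Here I would instead descend to the face $\{W : P^T W \ell_i = 0\}$ — legitimate because $0$ is an extreme point of $\SOC^{n+1}$ — on which every constraint is now linear, and apply Lemma \ref{lem:trs} to write $W = \sum_k w_k w_k^T$ with each $P^T w_k \in \SOC^{n+1}$ and $\sum_k (\ell_i^T w_k)(P^T w_k) = 0$. Pieces with $P^T w_k = 0$ are corrected by negation as above, and the positivity of the first coordinate of the remaining $P^T w_k \ne 0$ constrains the signed combination that must vanish. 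Reorganizing this decomposition so that every surviving piece simultaneously keeps $P^T w_k \in \SOC^{n+1}$ and attains $\ell_i^T w_k \ge 0$ is the delicate step, and it is exactly where I would invoke the rank-one decomposition technique adapted from \cite{Burer.2015} (in the spirit of theorem 2.7 of \cite{Ye.Zhang.2003a}). Combining the generic cases with the apex case shows every extreme ray is rank one, hence a generator of the left-hand side, yielding $\supseteq$ and therefore equality.
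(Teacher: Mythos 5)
Your high-level plan (reduce to extreme rays of the right-hand cone and show each has rank one) is the same as the paper's, and your easy inclusion is fine, but the heart of your perturbation argument rests on a false geometric claim. At a nonzero boundary point $u \in \bd(\SOC^{n+1})$, the restriction of $\myJ$ to the tangent hyperplane $\{d : u^T \myJ d = 0\}$ is \emph{negative} semidefinite with kernel $\Span\{u\}$, not positive semidefinite: writing $u = (u_1; \bar u)$ with $u_1 = \|\bar u\| > 0$, any $d = (d_1;\bar d)$ in that hyperplane satisfies $d^T \myJ d = (\bar u^T \bar d)^2/\|\bar u\|^2 - \|\bar d\|^2 \le 0$ by Cauchy--Schwarz, with equality only when $d \in \Span\{u\}$. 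Equivalently, $\{v : u^T \myJ v = 0\}$ is a supporting hyperplane of $\SOC^{n+1}$ whose exposed face is the single ray through $u$, so any point of $\SOC^{n+1}$ lying in that hyperplane is a multiple of $u$. Consequently, requiring $W \pm \epsilon D$ feasible when $u := P^T W \ell_i \in \bd(\SOC^{n+1})\setminus\{0\}$ forces not merely tangency but $P^T D \ell_i \in \Span\{u\}$ --- up to $n$ linear conditions rather than one --- and your dimension count collapses: for $\rank(W) = 2$ the $3$-dimensional direction space cut by these conditions can be exactly $\Span\{W\}$, so no valid $D$ is produced. (Even granting your lenient count, $\binom{r+1}{2} - 2 = 1$ at $r = 2$ does not strictly exceed $\dim\Span\{W\} = 1$, so the ``short check'' you defer fails precisely in the smallest nontrivial case.) The active-boundary case is the crux of the lemma, and a symmetric two-sided perturbation cannot settle it. Separately, your apex case is not a proof at all: you explicitly defer the ``delicate step'' to the technique of \cite{Burer.2015} without executing it.

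For contrast, the paper resolves the boundary case with a \emph{one-sided}, rank-one subtraction instead of a symmetric perturbation. When $P^T W \ell_i \in \bd(\SOC^{n+1})$ and $v := W\ell_i \ne 0$, one checks that $vv^T$ itself belongs to the left-hand cone (since $\ell_i^T v = \ell_i^T W \ell_i \ge 0$ and $P^T v \in \bd(\SOC^{n+1})$) and that $W_\epsilon := W - \epsilon\, vv^T$ stays in the right-hand cone for small $\epsilon > 0$: positive semidefiniteness holds because $v \in \Range(W)$; the $\myJ$-inequality is unchanged because $(P^T v)^T \myJ (P^T v) = 0$ by Lemma \ref{lem:propJ}(ii); and $P^T W_\epsilon \ell_i = (1 - \epsilon\, \ell_i^T v) P^T v$ remains in the cone. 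Writing $W = W_\epsilon + \epsilon\, vv^T$ and invoking extremality of $W$ forces $W$ to be a positive multiple of $vv^T$, hence rank one --- no dimension count is needed, and the same computation covers your troublesome apex subcase $P^T W \ell_i = 0$ with $W\ell_i \ne 0$. The remaining degenerate subcase $W \ell_i = 0$ follows cleanly from Lemma \ref{lem:trs}: decomposing $W = \sum_k w^k (w^k)^T$ with $P^T w^k \in \SOC^{n+1}$, the identity $\ell_i^T W \ell_i = \sum_k (\ell_i^T w^k)^2 = 0$ forces $\ell_i^T w^k = 0$ for every $k$, so $W$ already lies in the left-hand cone and extremality gives rank one. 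You would need to replace your tangency/curvature count with an argument of this kind for the lemma to be proven.
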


\begin{proof} In the statement of the proposition, let $\G[i]$ be the
left-hand-side set, and let $\Rc[i]$ be the right-hand side. Clearly
$\G[i] \subseteq \Rc[i]$. We show the reverse inclusion by proving
$\Rc[i]$ has rank-1 extreme rays. Indeed, let $W \ne 0$ be an arbitrary
extreme ray in $\Rc[i]$. We will show $\rank(W) = 1$ by considering
three cases{\color{black} .}

First consider when $P^T W \ell_i \in \myint(\SOC)$, in which case
$W$ must also be extreme in $\{ W \in \PSD^{n+2} : \myJ \bullet P^T W P
\ge 0 \}$. Then $\rank(W) = 1$ by Lemma \ref{lem:trs}.

Next consider when $P^T W \ell_i \in \bd( \SOC)$ with $W
\ell_i = 0$. Using Lemma \ref{lem:trs}, we write $W = \sum_k w^k
(w^k)^T$ for $w^k$ satisfying $P^T w^k \in \SOC$. We then have
\[
    W \ell_i = 0
    \quad \Rightarrow \quad
    \ell_i^T W \ell_i = 0
    \quad \Rightarrow \quad
    \sum_k (\ell_i^T w^k)^2 = 0
    \quad \Rightarrow \quad
    \ell_i^T w^k = 0 \ \ \forall \ k,
\]
which proves $W \in \G[i]$ and hence $W$ is extreme in $\G[i]$. Thus,
$\rank(W) = 1$, as desired.

Finally, consider when $P^T W \ell_i \in \bd( \SOC)$ with
$W \ell_i \ne 0$. Define $v := W \ell_i \ne 0$ so that $P^T v \in
\bd(\SOC)$. In addition, $W \in \PSD^{n+2}$ implies $\ell_i^T v =
\ell_i^T W \ell_i \ge 0$. Hence, we conclude that $vv^T$ is a nonzero
member of $\G[i] \subseteq \Rc[i]$. Next, for small $\epsilon > 0$,
define $W_\epsilon := W - \epsilon \, vv^T$; we claim $W_\epsilon \in
\Rc[i]$. Indeed, $W_\epsilon \in \PSD^{n+2}$ because it is a rank-1
perturbation of $W \in \PSD^{n+2}$ with $v \in \Range(W)$ \cite[{\color{black} L}emma
2]{Burer.etal.2009}. Moreover, $P^T v \in \bd(\SOC)$ implies by
Lemma \ref{lem:propJ}(ii) that
\[
    \myJ \bullet P^T W_\epsilon P = \myJ \bullet P^T W P - \epsilon (P^T
    v)^T \myJ (P^T v) = \myJ \bullet P^T W P - \epsilon \cdot 0 \ge 0.
\]
We also have
\[
    P^T W_\epsilon \ell_i = P^T W \ell_i - \epsilon (\ell_i^T v) P^T v =
    (1 - \epsilon \cdot \ell_i^T v) P^T v \in \bd(\SOC).
\]
Thus, when $\epsilon > 0$ is small, $W_\epsilon \in \Rc[i]$ as claimed.
Then the equation $W = W_\epsilon + \epsilon \, vv^T$ and the fact that
$W$ is extreme in $\Rc[i]$ imply $W$ must be a positive multiple of
$vv^T$, i.e., $\rank(W)= 1$, as desired.
\end{proof}

\noindent Our next lemma is a technical result about extreme rays in the
intersection of two closed convex cones.

\begin{lemma} \label{lem:ext2}
Let $\P$ be a closed convex cone, and let $\Q$ be a half-space
containing the origin. Every extreme ray of $\P \cap \Q$ is either an
extreme ray of $\P$ or can be expressed as the sum of two extreme rays
of $\P$.
\end{lemma}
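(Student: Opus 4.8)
The plan is to fix a generator $r$ of an extreme ray of $\P \cap \Q$, write the half-space as $\Q = \{ w : a^T w \ge 0 \}$ for some normal vector $a$ (its boundary hyperplane passes through the origin since $\Q$ contains the origin), and split the analysis according to the sign of $a^T r$; note $a^T r \ge 0$ because $r \in \Q$.

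When $a^T r > 0$, I would show that $r$ is already an extreme ray of $\P$. Suppose $r = x + y$ with $x, y \in \P$ nonzero. If both $a^T x \ge 0$ and $a^T y \ge 0$, then $x, y \in \P \cap \Q$, and extremality of $r$ in $\P \cap \Q$ forces them to be nonnegative multiples of $r$. Otherwise one of them, say $x$, has $a^T x < 0$, which forces $a^T y > 0$; then for any $t$ in the open interval $(-a^T x / a^T y, \, 1)$ the tilted pair $\tilde x := x + t y$ and $\tilde y := (1-t) y$ both lie in $\P \cap \Q$ and sum to $r$, so extremality makes $\tilde y$—and hence $y$, and then $x = r - y$—a nonnegative multiple of $r$. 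Either way $r$ is extreme in $\P$.

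The main work, and the main obstacle, is the boundary case $a^T r = 0$. If $r$ happens to be extreme in $\P$ we are done, so assume not and write $r = x + y$ with $x, y \in \P$ not multiples of $r$; since $a^T x + a^T y = 0$ and the subcase $a^T x = 0$ would again put both summands in $\P \cap \Q$ (contradicting extremality), we may assume $a^T x > 0 > a^T y$. Here I would invoke Minkowski's theorem—valid because $\P$ is pointed, as it is in our application where $\P \subseteq \PSD^{n+2}$—to decompose $x$ and $y$ into extreme rays of $\P$, collecting them as generators $u_j$ with $s_j := a^T u_j > 0$ and $v_k$ with $t_k := -a^T v_k > 0$, so that $r = \sum_j \lambda_j u_j + \sum_k \mu_k v_k$ with all $\lambda_j, \mu_k > 0$. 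A short argument shows that no extreme ray in this decomposition can lie on $H := \{ w : a^T w = 0 \}$: such a ray $z$ would satisfy $z, r - z \in \P \cap \Q$, forcing $z$ to be a multiple of $r$ and hence making $r$ extreme in $\P$, a contradiction.

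To finish, I would form the boundary elements $w_{jk} := t_k u_j + s_j v_k$, each of which satisfies $a^T w_{jk} = 0$ and hence lies in $\P \cap \Q$, and verify the identity $\sum_{j,k} c_{jk} w_{jk} = r$ with the transportation coefficients $c_{jk} := \lambda_j \mu_k / \sigma$, where $\sigma := \sum_j \lambda_j s_j = \sum_k \mu_k t_k > 0$; the balance $a^T r = 0$ is exactly what makes the resulting row and column sums reproduce $\lambda_j$ and $\mu_k$. Since every $c_{jk} > 0$ and $r$ is extreme in $\P \cap \Q$, each $w_{jk}$ must be a positive multiple of $r$; picking any one exhibits $r$ as a positive combination $\gamma (t_k u_j + s_j v_k)$ of the two extreme rays $u_j, v_k$ of $\P$, which are linearly independent because they sit strictly on opposite sides of $H$. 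This is the desired conclusion. I expect the delicate points to be the pointedness hypothesis needed for the extreme-ray decomposition and the bookkeeping that rules out rays on $H$; the tilting estimate of the first case and the transportation identity are then routine.
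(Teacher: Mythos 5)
Your overall architecture is sound, and it differs genuinely from the paper's treatment: the paper does not prove this lemma at all, but simply cites \cite[Lemma 5]{Burer.2015}, so a self-contained argument has real value. Your Case 2 (the boundary case $a^T r = 0$) checks out: the Minkowski decomposition of $r$ into extreme rays of $\P$, the observation that no ray in that decomposition can lie on $H$ (else extremality of $r$ in $\P \cap \Q$ would make $r$ itself extreme in $\P$), and the transportation pairing $w_{jk} := t_k u_j + s_j v_k$ with coefficients $\lambda_j \mu_k / \sigma$ do reconstruct $r$ and force each $w_{jk}$ onto the ray of $r$, as you claim. (Incidentally, the intermediate decomposition $r = x + y$ is unnecessary there; you can apply Minkowski's theorem to $r$ directly.)

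There is, however, a genuine gap in Case 1 ($a^T r > 0$), and it sits exactly where pointedness---which you invoke only for Minkowski's theorem in Case 2---is actually needed. In the subcase $a^T x < 0$, your tilting argument correctly forces $y = \lambda r$ with $\lambda \ge 0$, but the final inference ``and then $x = r - y$ [is] a nonnegative multiple of $r$'' is false: from $a^T x < 0$ and $a^T r > 0$ you get $x = (1-\lambda) r$ with $\lambda > 1$, a \emph{negative} multiple of $r$. The correct conclusion is that this subcase cannot occur when $\P$ is pointed, since $x \in \P$ and $-x = (\lambda - 1) r \in \P$ would force $x = 0$, contradicting $a^T x < 0$. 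Without pointedness the lemma itself is false: take $\P = \{ w \in \R^2 : w_2 \ge 0 \}$ and $\Q = \{ w \in \R^2 : w_1 \ge 0 \}$; then $r = (1,0)^T$ generates an extreme ray of $\P \cap \Q$ (the first quadrant), yet $\P$ contains a line and has no extreme rays whatsoever, and your Case 1 chain breaks at precisely the step above with $x = -r$, $y = 2r$. So pointedness is not merely a technical convenience for the extreme-ray decomposition; it must be stated as a hypothesis (it does hold in the paper's application, where $\P \subseteq \PSD^{n+2}$) and invoked explicitly in Case 1. With that one-line repair, your proof goes through.
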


\begin{proof}
See \cite[Lemma 5]{Burer.2015}.
\end{proof}

We are now ready to prove Theorem \ref{the:main}.

\begin{proof}
Since $\G^0 \subseteq \Rc^0$ by construction, we show the reverse
inclusion by proving that every extreme ray $W$ of $\Rc^0$ has rank
1 and hence is an element of $\G^0$. We define $v_i := W \ell_i$ for
$i=1,2$. Note that $v_1 v_1^T \in \G^0$ because $\ell_1^T v_1 = \ell_1^T
W \ell_1 \ge 0$, $\ell_2^T v_1 = \ell_2^T W \ell_1 = 0$, and $P^T v_1 =
P^T W \ell_1 \in \SOC$. A similar argument shows $v_2 v_2^T \in
\G^0$.

We first consider the case when $v_1 = 0$. Applying Lemma
\ref{lem:basecase} for the case $i=2$, we express $W$ as
\[
    W = \sum_k w^k (w^k)^T, \quad P^T w^k \in \SOC \text{ and }
    \ell_2^T w^k \ge 0 \quad \forall \ k.
\]
As in the proof of Lemma \ref{lem:basecase}, $v_1 = 0$ then implies
$\ell_1^T w^k = 0$ for all $k$. So $W \in \G^0$, and because $W$ is
extreme in $\Rc^0 \supseteq \G^0$, it must have rank 1. A similar
argument shows $\rank(W) = 1$ for the case $v_2 = 0$.

So we assume from this point forward that $v_i := W \ell_i \ne 0$
for both $i=1,2$. Since $W \in \PSD^{n+2}$ ensures $W \ell_i = 0
\Leftrightarrow \ell_i^T W \ell_i = 0$, we have $\ell_i^T v_i = \ell_i^T
W \ell_i > 0$ for both $i$.

The second case we consider assumes $\myJ \bullet P^T W P = 0$ and $P^T
v_i = P^T W \ell_i \in \myint(\SOC)$ for both $i=1,2$. Then $W$ is
extreme for the equality-constrained cone $\{ W \in \PSD^{n+2} : \myJ
\bullet P^T W P = 0, \ell_1^T W \ell_2 = 0 \}$, which in turn implies
that $W$ is extreme for the inequality-constrained cone $\P \cap \Q$,
where
\[
    \P := \left\{ W \in \PSD^{n+2} : \myJ  \bullet P^T W P \ge 0
    \right\}, \ \ \ \
    \Q:= \{ W \in \Sbb^{n+2} : \ell_1^T W \ell_2 \ge 0 \},
\]
Applying Lemma \ref{lem:trs} with $\P$ and Lemma~\ref{lem:ext2} with $\P
\cap \Q$, we conclude that $\rank(W) \le 2$. If its rank equals 1, we
are done. So assume $\rank({\color{black} W}) = 2$. We derive a contradiction to the
assumption that $W$ is extreme in $\Rc^0$. Consider the equation
\[
    U := \begin{pmatrix} \ell_1^T \\ \ell_2^T \\ I \end{pmatrix} W \begin{pmatrix} \ell_1 & \ell_2 & I \end{pmatrix}
    =
    \begin{pmatrix} \ell_1^T W\ell_1 & \ell_1^T W\ell_2 & \ell_1^T W \\
        \ell_2^T W\ell_1 & \ell_2 W\ell_2 & \ell_2^T W \\
        W\ell_1          & W\ell_2          & W 
    \end{pmatrix} =
    \begin{pmatrix}
        \ell_1^T v_1 & 0 & v_1^T \\
        0   & \ell_2^T v_2 & v_2^T \\
        v_1 & v_2         & W
    \end{pmatrix},
\]
and recall that $\ell_1^T v_1 > 0$ and $\ell_2^T v_2 > 0$. It holds
that $U$ is PSD with $\rank(U) \le \rank(W) = 2$, {\color{black} and hence
the $\Range(W)$ is spanned by $\{v_1, v_2\}$ because the first
two columns of $U$ are clearly linearly independent. Then, because $W \succeq
0$ with $\rank(W) = 2$, it must hold that $$W = \lambda_1 v_1 v_1^T +
\lambda_2 v_2 v_2^T,$$ for some positive multipliers $\lambda_i > 0$.}
However, this contradicts the assumption that $W$ is extreme in $\Rc^0$
due to the fact that both $v_1 v_1^T$ and $v_2 v_2^T$ are elements of
$\G^0 \subseteq \Rc^0$.

For our third and final case, we assume $\myJ  \bullet P^T W P > 0$, $P^T
v_1 \in \bd(\SOC)$, or $P^T v_2 \in
\bd(\SOC)$. Let us consider two perturbations of $W$:
\[
    W_{\epsilon_i} := W - \epsilon_i \, v_i v_i^T \quad \forall \ i=1,2
\]
for two parameters $\epsilon_i > 0$. We claim that $W_{\epsilon_i} \in
\Rc^0$ for at least one $i$, in which case $W$ must be rank-1 as argued
in the proof of Lemma \ref{lem:basecase}.

Using $W \in \Rc^0$, we first argue that each $W_{\epsilon_i}$ satisfies
all constraints of $\Rc^0$, except possibly $\myJ  \bullet P^T W_{\epsilon_i}
P \ge 0$. Fix $i = 1$; the proof for $i=2$ is similar. We know
$W_{\epsilon_1} \in \PSD^{n+2}$ since $v_1 \in \Range(W)$ \cite[Lemma
2]{Burer.etal.2009}. We also have
\[
    P^T W_{\epsilon_1} \ell_1 =
    P^T v_1 - \epsilon_1 P^T v_1 (v_1^T \ell_1) =
    (1 - \epsilon_1 (v_1^T \ell_1)) P^T v_1
    \in \SOC
\]
for small $\epsilon_1 > 0$. Furthermore, noting that $v_1^T \ell_2 =
\ell_1^T W \ell_2 = 0$, we see
\[
    P^T W_{\epsilon_1} \ell_2 = 
    P^T v_2 - \epsilon_1 P^T v_1 (\ell_1^T W \ell_2) =
    P^T v_2 - \epsilon_1 P^T v_1 \cdot 0 = P^T v_2 
    \in \SOC.
\]
Finally,
\[
    \ell_1^T W_{\epsilon_1} \ell_2
    = \ell_1^T W \ell_2 - \epsilon_1 (\ell_1^T v_1)(v_1^T \ell_2) 
    = \ell_1^T W \ell_2 - \epsilon_1 (\ell_1^T v_1)(\ell_1^T W \ell_2) 
    = 0 - \epsilon_1 \cdot \ell_1^T v_1 \cdot 0 = 0,
\]
as desired.

We now claim that at least one $W_{\epsilon_i}$ satisfies the remaining
constraint
\[
    \myJ  \bullet P^T W_{\epsilon_i} P = \myJ  \bullet P^T W P
    - \epsilon_i \myJ  \bullet (P^T v_i) (P^T v_i)^T \ge 0
\]
for small $\epsilon_i > 0$, thus completing the proof as discussed
above. If $\myJ \bullet P^T W P > 0$, then both $W_{\epsilon_i}$
satisfy the inequality. If $P^T v_1 \in \bd(\SOC)$, then
$W_{\epsilon_1}$ satisfies the inequality because $\myJ \bullet (P^T
v_i)(P^T v_i)^T = 0$ by Lemma \ref{lem:propJ}(ii). Similarly if $P^T v_2
\in \bd(\SOC)$, then $W_{\epsilon_2}$ satisfies the inequality.
\end{proof}

\section{Conclusions}

In this paper, we have constructed strong relaxations for
(\ref{equ:origqp}) by transforming its feasible set, lifting to one
higher dimension, and employing standard relaxation techniques from
the literature. Our computational results demonstrate the strength of
our relaxation, and the time required for solving our relaxation is
small compared to those in the literature. We have also shown that our
relaxation is exact for the case of two balls.

There are many open questions related to this research. For example, is
it possible to prove that our relaxation \Us~is always at least as tight
as \Kron, as supported by the computational evidence? Furthermore,
is \Us~provably as strong as the method of \cite{Eltved.Burer.2023}?
Is it possible to extend Theorem \ref{the:main} to the case of more
constraints? Finally, even when our relaxation is not exact in practice
for $m \ge 3$, could it be used within an effective global optimization
algorithm of (\ref{equ:origqp})?

\section*{Statements and Declarations}

The author declares that he has no competing interests.

\end{onehalfspace}

\section*{Acknowledgments}

The author expresses his sincere thanks to Kurt Anstreicher for an
important observation, which ultimately led to the establishment of
Theorem \ref{the:main}. Thanks are also extended to the anonymous
reviewers and editors, whose suggestions have improved this paper
immensely.

\bibliographystyle{abbrv}
\bibliography{paper}

\end{document}